\newtheorem{Prop}{Proposition}[section]
\newtheorem{Thm}[Prop]{Theorem}
\newtheorem{Lemma}[Prop]{Lemma}
\newtheorem{Cor}[Prop]{Corollary}
\newtheorem{Remark}[Prop]{Remark}
\newtheorem{Definition}[Prop]{Definition}
    \def\bbq{{\mathbb Q}} \def\bb1{{\mathbb 1}}
  \def\bbc{{\mathbb C}}
\def\ra{\rightarrow}
\def\uq2{U_q(\hat{sl}_2)}
\def\bb{{\bf b}}
\def\mc{{\mathcal{C}}}
\def\md{{\mathcal{D}}}
\def\ue{{\underline{e}}}
\begin{document}

\title{On positivity for generalized cluster variables of  affine quivers}

\author{Xueqing Chen, Ming Ding and Fan Xu}
\address{Department of Mathematics,
 University of Wisconsin-Whitewater\\
800 W.Main Street, Whitewater,WI.53190.USA}
\email{chenx@uww.edu (X.Chen)}
\address{School of Mathematical Sciences and LPMC,
Nankai University, Tianjin, P.R.China}
\email{m-ding04@mails.tsinghua.edu.cn (M.Ding)}
\address{Department of Mathematical Sciences\\
Tsinghua University\\
Beijing 100084, P.~R.~China} \email{fanxu@mail.tsinghua.edu.cn
(F.Xu)}

\thanks{Ming Ding was supported by NSF of China (No. 11301282) and Specialized Research Fund for the Doctoral Program of Higher Education (No. 20130031120004) and Fan Xu was supported by NSF of China (No. 11471177).}

\keywords{generalized cluster variable, cluster algebra, cluster
category, positivity.}

\bigskip

\begin{abstract}
It has been proved in \cite{LS} that cluster variables in cluster algebras of  every skew-symmetric cluster algebra are positive. We prove  that any regular generalized cluster variable of an affine quiver is positive. As a corollary, we obtain that generalized cluster variables of affine quivers are positive and construct various positive bases. This generalizes the results in \cite{Dupont} and \cite{DXX}.
\end{abstract}
\maketitle


\section{Introduction}
Cluster algebras were introduced by S. Fomin and A. Zelevinsky
\cite{FZ} in order to develop a combinatorial approach to study
problems of total positivity and canonical bases in quantum groups.  A cluster algebra $\mathcal{A}$ is a
subring of the field $\mathbb{Q}(x_1,\cdots,x_{n})$ of rational
fractions in $n$ indeterminates, and defined via a set of generators
constructed recursively. These generators are called cluster
variables and are grouped into subsets of fixed finite cardinality
called clusters. Monomials in the variables belonging to the same
cluster are called cluster monomials. By the Laurent phenomenon
\cite{FZ}, it is well-known that $\mathcal{A}\subset
\bigcap_{\mathrm{c}}\mathbb{Z}[\mathrm{c}^{\pm 1}]$ where
$\mathrm{c}$ runs over the clusters in $\mathcal{A}$. An element
$x\in \mathcal{A}$ is called  positive if $x\in
\bigcap_{\mathrm{c}}\mathbb{N}[\mathrm{c}^{\pm 1}]$ where
$\mathrm{c}$ runs over the clusters in $\mathcal{A}$. It is
conjectured that cluster variables are always positive \cite{FZ}. It has been proved in \cite{LS} that the positivity conjecture holds for the class of skew-symmetric cluster algebras.

Various $\mathbb{Z}-$bases were constructed in the cluster
algebras $\mathcal{A}(Q)$ \cite{SZ,CZ,GLS,Dupont1,DXX,GC}. When $Q$ is an affine
quiver, these
bases can be expressed as a disjoint union of the set of cluster
monomials and a set of generalized cluster variables $X_M$ associated to some
non-rigid regular $kQ-$modules $M$.  Hence, studying  the positivity for
regular generalized cluster variables is helpful for us to construct
canonical bases of cluster algebras.

For  types $\widetilde{A}$ and $\widetilde{D}$, G. Dupont has proved
\cite[Corollary 5.5]{Dupont} that if $M$ is an indecomposable
regular module in an exceptional tube $\mathcal{R}$, then $X_{M}\in
\mathcal{A}(Q)\bigcap\mathbb{N}[\mathrm{c}^{\pm 1}]$ for any cluster
$\mathrm{c}$ which is $\emph{compatible}$ with $\mathcal{R}$ (see Definition \ref{9}).

In this paper, we focus on cluster algebras of  affine quivers, that is of type $\widetilde{A},\widetilde{D}$ or $\widetilde{E}$  and prove that the coefficients of Laurent expansions  in normalized Chebyshev polynomials of the generalized cluster variable associated to quasi-simple modules in homogeneous tubes are
positive integer (Proposition \ref{Cheby}). As  an application, we deduce the positivity in regular generalized cluster variables (Theorem \ref{5}) and obtain various positive  integral bases  in cluster algebras of  affine quivers (Corollary \ref{basis}).

\section{preliminary}

Let $Q$ be an acyclic quiver with vertex set $Q_0=\{1,2,\cdots, n\}$
and we denote by $\mathcal{A}(Q)$ the associated cluster algebra.
Let $\bbc$ be the complex number field and $A=\bbc Q$ be the path
algebra of $Q$ and we denote by $P_i(resp.\ I_i)$ the indecomposable
projective$(resp.\ injective)$ $\bbc Q$-module with the simple
top$(resp.\ socle)$ $S_i$ corresponding to $i\in Q_0$. A connected
component in the Auslander-Reiten quiver of $\bbc Q-$modules is
called regular if it doesn't contain any projective or injective
$\bbc Q-$module. A $\bbc Q-$module is called regular if all of its
indecomposable direct summands belong to regular components. Every
regular component is of the form $\mathbb{Z}\mathbb{A}_{\infty}/(p)$
for $p\geq 0.$ If $p\geq 1$, the regular component is called a tube
of rank $p.$ If $p>1$, the regular component is called an
exceptional tube. If $p=1$, the regular component is called a
homogeneous tube. If $p=0$, the regular component is called a sheet.

Let $\md^b(Q)$ be the bounded derived category of $\mathrm{mod} \bbc
Q$ with the shift functor $T$ and the AR-translation $\tau$. The
cluster category associated to $Q$ was introduced in \cite{BMRRT} in
the spirit of categorification of cluster algebras. It is the orbit
category $\mathcal{C}(Q):=\md^{b}(Q)/F$ with $F=[1]\circ\tau^{-1}$.
Let $\mathbb{Q}(x_1,\cdots,x_n)$ be a transcendental extension of
$\mathbb{Q}.$ The Caldero-Chapton map of an acyclic quiver $Q$ is
the map
$$X_?^Q: \mathrm{obj}(\mc(Q))\ra\bbq(x_1,\cdots,x_n)$$ defined in \cite{CC} by
the following rules:
            \begin{enumerate}
                \item if $M$ is an indecomposable $\bbc Q$-module, then
                    $$
                        X_M^Q = \sum_{\textbf e} \chi(\mathrm{Gr}_{\ue}(M)) \prod_{i \in Q_0} x_i^{-\left<\ue, s_i\right>-\left <s_i, \underline{\mathrm{dim}}M - \ue\right
                        >};
                    $$
                \item if $M=P_i[1]$ is the shift of the projective module associated to $i \in Q_0$, then $$X_M^Q=x_i;$$
                \item for any two objects $M,N$ of $\mathcal C_Q$,
                we have
                    $$X_{M \oplus N}^Q=X_M^QX_N^Q.$$
            \end{enumerate}
Here, we denote by $\left <-,-\right >$ the Euler form on $\bbc
Q$-modules and $Gr_{\ue}(M)$ is the $\ue$-Grassmannian of $M,$ i.e. the
variety of submodules of $M$ with dimension vector $\ue.$
$\chi(\mathrm{Gr}_{\ue}(M))$ denote its Euler-Poincar$\acute{e}$
characteristic. We note that the indecomposable $\bbc Q$-modules and
$P_i[1]$ for $i\in Q_0$ exhaust the indecomposable objects of the
cluster category $\mc(Q)$. For any object $M\in \mc(Q)$, $X_M^Q$
will be called the generalized cluster variable for $M$. If $M$ is a
regular $\bbc Q$-module, $X_M^Q$ will be called the regular
generalized cluster variable. In \cite{Palu1}, the author
generalized the Caldero-Chapoton map for any cluster-tilting object
$T$ which is called the cluster character associated to $T$.

We recall that the normalized Chebyshev polynomial of the first kind
is defined by:
$$F_{0}(x)=2,F_{1}(x)=x,$$
$$F_{n}(x)=xF_{n-1}(x)-F_{n-2}(x)\ \ for\ any\ n\geq 2.$$
It is known that $F_{n}$ is characterized by
$$F_{n}(t+t^{-1})=t^{n}+t^{-n}$$
Following the terminology of \cite{DXX,Dupont1}, we denote the
generalized cluster variables associated to indecomposable regular
modules with quasi-length $n$ in homogeneous tubes for affine type
by $X_{n\delta}$.

\begin{Definition}\cite{Dupont}\label{9}
(1)\ A cluster-tilting object $T$ and
a regular component $\mathcal{R}$ are called to be compatible if $\mathcal{R}$ does
not contain any indecomposable direct summand of  $T$ as a
quasi-simple module.

(2)\ Let $T$ be any cluster-tilting object in $\mathcal{C}(Q)$. Set $F_{T}=Hom_{\mathcal{C}(Q)}(T,-)$,
$B=End_{\mathcal{C}(Q)}(T)$ and $\mathrm{c}_{T}=\{c_i|i\in Q_0\}$.
For any object $M$ in $\mathcal{C}(Q)$ which is not in $add(T[1])$
and for any $\underline{e}\in K_{0}(mod-B),$ the $\underline{e}-$component
of $X_{M}^{T}$ is
$$X_{M}^{T}(\underline{e})=\chi(\mathrm{Gr}_{\underline{e}}(F_{T}M))\prod_{i\in Q_0}c_{i}^{\langle S_{i},\underline{e}\rangle_{a}-\langle S_{i},F_{T}M\rangle}$$
(see \cite{Palu1} for definitions of  $\langle-,-\rangle$ and $\langle-,-\rangle_{a}$).
The interior of $X_{M}^{T}$ as
$$int(X_{M}^{T})=X_{M}^{T}-(X_{M}^{T}(0)+X_{M}^{T}([F_{T}M])$$
\end{Definition}

The following result which was proved in \cite{Dupont} will be useful for us to prove the positivity.

\begin{Thm}\cite{Dupont}\label{partial}
Let $Q$ be a quiver of affine types with at least three vertices. Let T be a cluster-tilting object such that there exists an exceptional tube $\mathcal{T}$
 compatible with $T$. Assume that $int(X_{M}^{T})\in \mathbb{N}[c_{T}^{\pm 1}]$ for any quasi-simple module $M$ in
  $\mathcal{T}$. Then, for
any $n\geq 1$, the following holds: $$F_{n}(X_{\delta})\in
\mathcal{A}(Q)\cap \mathbb{N}[c_{T}^{\pm 1}].$$
\end{Thm}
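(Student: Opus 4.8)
The plan is to reduce the positivity of $F_n(X_\delta)$ to the positivity of the interiors of the cluster characters attached to the modules in the exceptional tube $\mathcal{T}$, which is guaranteed by hypothesis for the quasi-simples and which I will then propagate to all quasi-lengths. Since $\mathcal{T}$ is compatible with $T$, no quasi-simple of $\mathcal{T}$ is a summand of $T[1]$, so $X_{M}^{T}$ is a well-defined Laurent polynomial in $c_{T}$ for every $M\in\mathcal{T}$, and it decomposes as $X_{M}^{T}=X_{M}^{T}(0)+int(X_{M}^{T})+X_{M}^{T}([F_{T}M])$. The two boundary terms correspond to the zero submodule $(\underline{e}=0)$ and to the whole module $(\underline{e}=[F_{T}M])$ in the Grassmannian filtration, hence are monomials in $c_{T}$ with coefficient $1$ and already lie in $\mathbb{N}[c_{T}^{\pm1}]$. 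Thus it suffices to show (i) that $int(X_{M}^{T})\in\mathbb{N}[c_{T}^{\pm1}]$ for every $M\in\mathcal{T}$, and (ii) that $F_{n}(X_\delta)$ is a nonnegative combination of such interiors together with monomials in $c_{T}$.

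For step (i), let $p=\mathrm{rank}\,\mathcal{T}$ with quasi-simples $M_{1},\dots,M_{p}$, and write $M_{i}[m]$ for the indecomposable of quasi-socle $M_{i}$ and quasi-length $m$, so that $\tau M_{i}[m]=M_{i-1}[m]$ cyclically. Applying the Caldero--Chapoton multiplication formula to the Auslander--Reiten sequences $0\to M_{i-1}[m]\to M_{i-1}[m+1]\oplus M_{i}[m-1]\to M_{i}[m]\to 0$ yields relations among the $X_{M_{i}[m]}^{T}$; separating off the boundary monomials and using that the interior is additive across the relevant decomposition produces a recursion for $int(X_{M_{i}[m]}^{T})$ with nonnegative coefficients in terms of interiors of strictly smaller quasi-length. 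An induction on quasi-length, seeded by the hypothesis $int(X_{M_{i}}^{T})\in\mathbb{N}[c_{T}^{\pm1}]$ for the quasi-simples, then gives $int(X_{M}^{T})\in\mathbb{N}[c_{T}^{\pm1}]$ for all $M\in\mathcal{T}$.

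For step (ii), I would exploit the Chebyshev structure of the regular characters. The defining recursion $F_{n}(x)=xF_{n-1}(x)-F_{n-2}(x)$ together with $F_{n}(t+t^{-1})=t^{n}+t^{-n}$ matches the recursion produced by the multiplication formula along the tube, where the modules $M_{i}[np]$ of dimension vector $n\delta$ play the role of the quasi-length-$n$ modules of the homogeneous tubes. Assembling these characters into the \emph{difference property} expression for $F_{n}(X_\delta)$ and cancelling the boundary monomials of the higher-quasi-length characters against the subtracted lower terms, one exhibits $F_{n}(X_\delta)$ as a nonnegative combination of the interiors $int(X_{M_{i}[m]}^{T})$ and of monomials in $c_{T}$. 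Combined with step (i), this gives $F_{n}(X_\delta)\in\mathbb{N}[c_{T}^{\pm1}]$; and $F_{n}(X_\delta)\in\mathcal{A}(Q)$ because $X_\delta$ lies in the cluster algebra and $\mathcal{A}(Q)$ is a ring, so $F_{n}(X_\delta)\in\mathcal{A}(Q)\cap\mathbb{N}[c_{T}^{\pm1}]$.

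The main obstacle is precisely the cancellation in step (ii): the Chebyshev recursion carries the negative term $-F_{n-2}$, so positivity is not formal, and one must verify that this subtracted contribution is matched exactly by the coefficient-$1$ boundary monomials of the higher-quasi-length characters, leaving a genuinely nonnegative residue. Making the difference-property identity explicit, with all its coefficients shown to be nonnegative, is the crux of the argument; the compatibility of $T$ with $\mathcal{T}$ and the monomial nature of the two boundary terms of each $X_{M}^{T}$ are exactly what make this cancellation clean.
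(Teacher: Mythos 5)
The paper does not prove this statement at all: Theorem~\ref{partial} is imported verbatim from the reference \cite{Dupont}, where it is established as one of the main results, and the present paper only uses it (together with Theorem~\ref{3}) to derive Corollary~\ref{1}. So there is no in-paper argument to compare yours against; what you have written is a reconstruction of the strategy of Dupont's original proof, and as such it does identify the right skeleton --- the three-term decomposition $X_{M}^{T}=X_{M}^{T}(0)+int(X_{M}^{T})+X_{M}^{T}([F_{T}M])$ with the two boundary terms being honest Laurent monomials (correct, since $\mathrm{Gr}_{0}$ and $\mathrm{Gr}_{[F_{T}M]}$ are points), the propagation of interior-positivity from quasi-simples up the tube, and the difference-property identity expressing $F_{n}(X_{\delta})$ as a difference of two characters from the exceptional tube.

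That said, as a proof your text has genuine gaps, and you name the largest one yourself. In step (i), the claim that the Caldero--Chapoton relation coming from the almost split sequence $0\to M_{i-1}[m]\to M_{i-1}[m+1]\oplus M_{i}[m-1]\to M_{i}[m]\to 0$ ``produces a recursion for $int(X_{M_{i}[m]}^{T})$ with nonnegative coefficients'' is exactly the technical heart of Dupont's argument and is not a formal consequence of additivity: one must track which Grassmannian strata of the middle term contribute to the interior versus the boundary of the two end terms, and verify that no negative contribution survives (this also requires knowing that the relevant extension space is one-dimensional, which is not automatic in the cluster category for large quasi-length). In step (ii), the assertion that the subtracted term in $F_{n}(X_{\delta})=X_{M_{i}[np]}-X_{M_{i+1}[np-2]}$ is cancelled exactly by boundary monomials of the larger character is again the crux, and you explicitly leave it unverified. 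So the proposal is a faithful outline of the cited proof rather than a proof; to stand on its own it would need the explicit combinatorial bookkeeping of interiors along the tube that occupies the corresponding sections of \cite{Dupont}.
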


In \cite{Dav}, the authors has solved the
positivity conjecture for  all cluster variables of all skew-symmetric
 quantum cluster algebras. In particular, we have the following result.
\begin{Thm}\cite{Dav}\label{3}
Let $Q$ be a quiver of affine types. Let $T$ be any cluster-tilting object in
$\mathcal{C}(Q)$ and $M$ be an indecomposable rigid object not in
$add(T[1])$. Then, for any $\mathrm{e}\in K_{0}(mod-B),$ we have
$$\chi(\mathrm{Gr}_{\mathrm{e}}(F_{T}M))\geq 0.$$
\end{Thm}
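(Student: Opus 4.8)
The plan is to reduce the assertion to a smoothness statement for a single quiver Grassmannian and then read off positivity from a torus action. Set $N=F_{T}M=\hom_{\mc(Q)}(T,M)$, regarded as a module over $B=\ed_{\mc(Q)}(T)$. The first step is to upgrade the rigidity of $M$ in the cluster category to a homological vanishing for $N$: since $M$ is rigid and has no summand in $\add(T[1])$, the Buan--Marsh--Reiten type equivalence $\mc(Q)/\add(T[1])\simeq\mod B$ coming from the cluster-tilting theory \cite{BMRRT} sends the rigid object $M$ to a rigid $B$-module, so that $\ext^{1}_{B}(N,N)=0$.

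Next I would prove that $\mathrm{Gr}_{\mathrm e}(N)$ is a smooth projective variety. The Zariski tangent space at a point $[U]\in\mathrm{Gr}_{\mathrm e}(N)$ is $\hom(U,N/U)$ and the obstruction to smoothness there is $\ext^{1}(U,N/U)$, so it suffices to show this group vanishes for every subrepresentation $U\subseteq N$. The cleanest way to secure this vanishing is to carry the computation into the hereditary category $\mod\bbc Q$, where the short exact sequence $0\to U\to N\to N/U\to 0$ together with $\ext^{2}=0$ forces $\ext^{1}(U,N/U)$ to be a subquotient of $\ext^{1}(N,N)=0$; hence every such obstruction vanishes and the quiver Grassmannian is smooth.

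I would then choose a generic $\bbc^{*}$-action, given by a grading of the underlying space of $N$ for which the structure maps are homogeneous, so that the induced action on $\mathrm{Gr}_{\mathrm e}(N)$ has only isolated fixed points, namely the coordinate subrepresentations. Since the variety is smooth and projective, the Bialynicki--Birula decomposition attached to this action is an affine paving; consequently the cohomology is concentrated in even degrees, $H^{\mathrm{odd}}(\mathrm{Gr}_{\mathrm e}(N))=0$, and therefore $\chi(\mathrm{Gr}_{\mathrm e}(N))=\sum_{i}\dim H^{2i}(\mathrm{Gr}_{\mathrm e}(N))\geq 0$, which is the claim.

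The main obstacle is that $B$ is a cluster-tilted algebra and is not hereditary, so the extension-vanishing argument of the second step cannot be run directly over $B$ — it genuinely uses $\ext^{2}=0$. One must therefore either transport the Euler characteristics $\chi(\mathrm{Gr}_{\mathrm e}(F_{T}M))$ back to quiver Grassmannians in $\mod\bbc Q$ through the compatibility of the cluster characters under change of cluster-tilting object, or establish smoothness over $B$ directly while arranging a $\bbc^{*}$-action that is compatible with the $B$-module structure and has isolated fixed points; this compatibility is the delicate point. In affine type one can, as a fallback, invoke the classification of rigid indecomposables (preprojective, preinjective, and regular of quasi-length strictly below the tube rank) and treat the regular case through the explicit geometry of tubes. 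Finally, the same conclusion follows by specialization from Davison's quantum positivity \cite{Dav}: the $v$-graded refinement of $\chi(\mathrm{Gr}_{\mathrm e}(F_{T}M))$ lies in $\bbn[v^{\pm1}]$, and evaluation at $v=1$ yields $\chi\geq 0$.
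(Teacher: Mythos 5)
The paper contains no proof of Theorem \ref{3}: it is imported from \cite{Dav} as a specialization of Davison's positivity theorem for quantum cluster algebras, so the only part of your proposal that coincides with the paper's route is your closing sentence. Your main geometric argument has two genuine gaps. First, the smoothness step fails exactly where you flag it: the vanishing of $\mathrm{Ext}^{1}(U,N/U)$ for every submodule $U\subseteq N=F_{T}M$ uses $\mathrm{Ext}^{2}=0$ twice (once to make $\mathrm{Ext}^{1}(N,N)\to \mathrm{Ext}^{1}(N,N/U)$ surjective and once for $\mathrm{Ext}^{1}(N,N/U)\to \mathrm{Ext}^{1}(U,N/U)$), and the cluster-tilted algebra $B=\mathrm{End}_{\mathcal{C}(Q)}(T)$ is not hereditary. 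The proposed repair of ``transporting the Euler characteristics back to $\mathrm{mod}\,\mathbb{C}Q$'' is not available: $\mathrm{Gr}_{\mathrm{e}}(F_{T}M)$ is a variety of $B$-submodules, and the compatibility of cluster characters under change of cluster-tilting object is an identity of generating functions, not an identification of the individual Grassmannians or of their Euler characteristics term by term. Second, the ``generic $\mathbb{C}^{*}$-action with isolated fixed points'' is asserted rather than constructed; such an action exists only when $F_{T}M$ admits a grading with one-dimensional weight spaces compatible (after rescaling) with the structure maps, essentially the condition that its coefficient quiver be a tree. That combinatorial input is the hard part of the cell-decomposition results of Cerulli Irelli and coauthors and cannot be assumed. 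Note also that for the bare inequality $\chi\geq 0$ smoothness buys nothing, since $\chi(X)=\chi(X^{\mathbb{C}^{*}})$ for any complex variety with a torus action; the whole weight of your argument rests on the action you have not produced.

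A caution about the fallback you share with the paper: to extract positivity of each individual $\chi(\mathrm{Gr}_{\mathrm{e}}(F_{T}M))$ from positivity of the Laurent expansion of $X_{M}^{T}$, one must know that distinct classes $\mathrm{e}$ contribute distinct monomials. In the classical expansion the exponent vector depends on $\mathrm{e}$ only through the antisymmetrized Euler form, whose radical in affine type contains $\delta$, so distinct classes can collide and classical positivity only controls certain sums of Euler characteristics. This is precisely why the quantum refinement of \cite{Dav} is needed: the $q$-graded coefficients separate the contributions. If you take that route, this separation step is the actual content of the deduction and should be stated explicitly.
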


According to Theorem \ref{partial} and Theorem \ref{3}, we can deduce the following
corollary which are proved in  \cite{Dupont} for types $\widetilde{A}$ and
$\widetilde{D}$:
\begin{Cor}\label{1}
Let $Q$ be a quiver of affine types with at least three vertices. Let T be a cluster-tilting object such that there exists an exceptional tube $\mathcal{T}$
 compatible with $T$.  Then, for
any $n\geq 1$, the following holds: $$F_{n}(X_{\delta})\in
\mathcal{A}(Q)\cap \mathbb{N}[c_{T}^{\pm 1}].$$
\end{Cor}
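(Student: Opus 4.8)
The plan is to reduce the statement to verifying the single hypothesis of Theorem \ref{partial}, namely that $int(X_{M}^{T})\in\mathbb{N}[c_{T}^{\pm 1}]$ for every quasi-simple module $M$ in the exceptional tube $\mathcal{T}$, and then to establish this hypothesis using the nonnegativity of the Euler characteristics of quiver Grassmannians supplied by Theorem \ref{3}.

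First I would unwind Definition \ref{9}(2). Since $X_{M}^{T}(\underline{e})$ is the $\underline{e}$-component of $X_{M}^{T}$, the interior is simply the sum of all components other than the two extreme ones:
$$int(X_{M}^{T})=\sum_{\underline{e}\neq 0,\,[F_{T}M]}X_{M}^{T}(\underline{e}),\qquad X_{M}^{T}(\underline{e})=\chi(\mathrm{Gr}_{\underline{e}}(F_{T}M))\prod_{i\in Q_{0}}c_{i}^{\langle S_{i},\underline{e}\rangle_{a}-\langle S_{i},F_{T}M\rangle}.$$
Each summand is the product of the integer $\chi(\mathrm{Gr}_{\underline{e}}(F_{T}M))$ with a single Laurent monomial in the $c_{i}$, so $int(X_{M}^{T})\in\mathbb{N}[c_{T}^{\pm 1}]$ as soon as $\chi(\mathrm{Gr}_{\underline{e}}(F_{T}M))\geq 0$ for every $\underline{e}$.

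The crux is therefore to check that Theorem \ref{3} is applicable to each quasi-simple $M$ of $\mathcal{T}$, i.e. that $M$ is an indecomposable rigid object of $\mathcal{C}(Q)$ not lying in $\mathrm{add}(T[1])$. For rigidity I would use that $\mathcal{T}$, being exceptional, has rank $p>1$, so the quasi-simple $M$ has quasi-length $1<p$; by the Auslander--Reiten formula $\mathrm{Ext}^{1}_{\mathbb{C}Q}(M,M)\cong D\,\mathrm{Hom}_{\mathbb{C}Q}(M,\tau M)=0$, since $\tau M$ is a quasi-simple distinct from $M$, and hence $\mathrm{Ext}^{1}_{\mathcal{C}(Q)}(M,M)=0$. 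For the membership condition I would invoke the identification $\tau\cong[1]$ in the cluster category: the objects of $\mathrm{add}(T[1])$ lying in $\mathcal{T}$ are exactly the $\tau T_{j}$ with $T_{j}$ an indecomposable summand of $T$ contained in $\mathcal{T}$, and if $M=\tau T_{j}$ then $T_{j}=\tau^{-1}M$ would be a quasi-simple summand of $T$ inside $\mathcal{T}$, contradicting compatibility (Definition \ref{9}(1)). With both conditions secured, Theorem \ref{3} yields $\chi(\mathrm{Gr}_{\underline{e}}(F_{T}M))\geq 0$, so the hypothesis of Theorem \ref{partial} holds and the desired conclusion $F_{n}(X_{\delta})\in\mathcal{A}(Q)\cap\mathbb{N}[c_{T}^{\pm 1}]$ follows for all $n\geq 1$.

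The main obstacle is precisely this membership verification: the whole argument hinges on confirming that quasi-simple modules of an exceptional tube are genuine rigid indecomposable objects outside $\mathrm{add}(T[1])$, which is exactly the point at which the hypotheses "$p>1$" and "compatible" are consumed. Once that is in place the proof is purely formal, consisting of a termwise positivity observation for the Laurent expansion of $int(X_{M}^{T})$ followed by an appeal to the already-established Theorem \ref{partial}.
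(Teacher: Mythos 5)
Your proposal is correct and follows exactly the route the paper intends: the paper derives Corollary \ref{1} by combining Theorem \ref{partial} with Theorem \ref{3}, and your write-up simply supplies the details it leaves implicit (that each quasi-simple of the compatible exceptional tube is an indecomposable rigid object outside $\mathrm{add}(T[1])$, so Theorem \ref{3} makes every component of $int(X_{M}^{T})$ a nonnegative multiple of a Laurent monomial). No gaps; your verification of rigidity via the Auslander--Reiten formula and of non-membership in $\mathrm{add}(T[1])$ via compatibility is exactly where the hypotheses are consumed.
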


\section{positivity for regular generalized cluster variables of  affine quivers}
In this section, we will prove the positivity for regular generalized cluster variables of  affine quivers.

\begin{Lemma}\label{lemma}
Let $Q$ be a quiver of affine  types and $T$ be a cluster-tilting object of $\mathcal{C}(Q)$ such that each exceptional regular component $\mathcal{R}$  contains at least an indecomposable direct summand of  $T$. Then there exists some $i$ such that
$T_i$ is not a regular module and satisfies that $dim_{\bbc}\
Ext_{\mathcal{C}(Q)}^{1}(T_i,M(\delta))=1$, where $M(\delta)$ is a regular module  with dimension vector $\delta$ in any homogeneous tube.
\end{Lemma}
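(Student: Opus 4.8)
The plan is to reduce the statement to a computation of $\dim\mathrm{Ext}^1_{\mathcal{C}(Q)}(T_i,M(\delta))$ for each indecomposable summand $T_i$, and then to locate a non-regular summand for which this dimension equals $1$. First I would record that a homogeneous tube carries no rigid regular module: its quasi-simple has dimension vector $\delta$ with $\langle\delta,\delta\rangle=0$, hence (being a brick) has a self-extension. So every regular direct summand of $T$ lies in an exceptional tube. Since $M(\delta)$ lies in a homogeneous tube, it is separated from every exceptional tube, whence $\mathrm{Hom}$ and $\mathrm{Ext}^1$ over $\mathbb{C}Q$ between $M(\delta)$ and any regular summand vanish; by the BMRRT isomorphism $\mathrm{Ext}^1_{\mathcal{C}(Q)}(X,Y)\cong\mathrm{Ext}^1_{\mathbb{C}Q}(X,Y)\oplus D\mathrm{Ext}^1_{\mathbb{C}Q}(Y,X)$ this forces $\mathrm{Ext}^1_{\mathcal{C}(Q)}(T_i,M(\delta))=0$ for every regular $T_i$. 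Using the tame identity $\sum_l(p_l-1)=n-2$ over the exceptional tubes of rank $p_l$, together with the fact that a rigid object meets such a tube in at most $p_l-1$ indecomposables, the $n$ summands of $T$ include at most $n-2$ regular ones, hence at least two non-regular summands. So only the non-regular summands matter.

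Next I would compute $\dim\mathrm{Ext}^1_{\mathcal{C}(Q)}(T_i,M(\delta))$ for a non-regular summand. Writing $\partial M=\langle\delta,\underline{\mathrm{dim}}\,M\rangle$ for the defect (which is $\tau$-invariant and equals $-\delta_j$ on the $\tau$-orbit of $P_j$ and $+\delta_j$ on that of $I_j$), and using $\tau M(\delta)=M(\delta)$ together with the vanishing of backward $\mathrm{Hom}$'s between the preprojective, regular and preinjective parts, the two terms of the BMRRT formula combine to give $\dim\mathrm{Ext}^1_{\mathcal{C}(Q)}(T_i,M(\delta))=|\partial T_i|=\delta_{j(i)}$, where $j(i)$ is the $\tau$-orbit index of $T_i$. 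The shifted-projective case is handled directly by $\mathrm{Ext}^1_{\mathcal{C}(Q)}(P_j[1],M(\delta))\cong\mathrm{Hom}_{\mathbb{C}Q}(P_j,M(\delta))$, of dimension $\delta_j$. Thus the lemma becomes equivalent to: \emph{some non-regular summand of $T$ has $\tau$-orbit index $j$ with $\delta_j=1$}, i.e.\ a non-regular summand of defect $\pm1$.

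To produce such a summand I would pass to a perpendicular category. Let $R$ be the regular part of $T$; since $\delta$ is orthogonal to every exceptional tube and $M(\delta)$ sits in a tube disjoint from $R$, both descend to the right perpendicular category $R^{\perp}\simeq\mathrm{mod}\,H'$, which is hereditary and again tame (it still contains the homogeneous tube of $M(\delta)$), with null root $\delta$ and the same defect functional. The non-regular summands of $T$ then form a cluster-tilting object $T'$ of $\mathcal{C}(H')$; because over a tame hereditary algebra an indecomposable is regular if and only if its defect vanishes, and defect is preserved by the embedding, none of these summands is regular in $H'$ either. Hence the problem reduces to the following claim: \emph{a cluster-tilting object of a tame cluster category having no regular summand possesses a summand of defect $\pm1$}. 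The base case $H'\cong\widetilde{A}_1$ (Kronecker) is immediate, since there $\delta=(1,1)$ and every non-regular indecomposable has defect $\pm1$; for type $\widetilde{A}$ the whole lemma follows at once for the same reason.

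The main obstacle is precisely this reduced claim for the remaining affine types: the vertices with $\delta_j\ge2$ may lie in several distinct $\tau$-orbits, and a priori all non-regular summands could be supported there. I expect to settle it by combining the rigidity constraints with the structure of regular-free (separating) tilting modules over tame hereditary algebras, whose preprojective part is governed by a hereditary algebra obtained by deleting an extending vertex; iterating this deletion, anchored by the primitivity of $\delta$ (so that a vertex with $\delta_j=1$ always exists), should force a summand lying over such a vertex. The delicate points, and where the bulk of the work lies, are the compatibility of the defect functional under the perpendicular embedding and the control of defects along the tilting/mutation reduction.
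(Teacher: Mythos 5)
Your reduction is sound and, as far as it goes, more precise than the paper's own argument: the computation $\dim\mathrm{Ext}^1_{\mathcal{C}(Q)}(T_i,M(\delta))=0$ for regular summands and $=\delta_{j(i)}$ for a transjective summand in the $\tau$-orbit of vertex $j(i)$ is correct (via $\tau M(\delta)=M(\delta)$ and the BMRRT decomposition), as is the count showing $T$ has at least two non-regular summands. This correctly converts the lemma into the existence of a non-regular summand of defect $\pm1$, and it disposes of type $\widetilde{A}$ immediately since there $\delta=(1,\dots,1)$.

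The genuine gap is that the resulting existence claim is exactly the hard part for types $\widetilde{D}$ and $\widetilde{E}$, and you do not prove it: you state that you ``expect to settle it'' by a perpendicular-category/deletion induction and explicitly defer ``the bulk of the work.'' For $\widetilde{E}_8$, for instance, $\delta=(1,2,2,3,3,4,4,5,6)$-type data leaves a single $\tau$-orbit of defect $\pm1$, while there are up to $n-2=7$ regular summands and only two transjective ones, so a priori both transjective summands could lie in orbits with $\delta_j\ge 2$; ruling this out is precisely what must be argued, and the proposal contains no proof of it. The supporting steps of your intended reduction (that the Euler form and hence the defect restrict correctly to $R^{\perp}$, that $R^{\perp}$ stays connected tame, and that the transjective part of $T$ descends to a cluster-tilting object there) are also only asserted. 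A further warning sign is that your argument never uses the hypothesis that \emph{every} exceptional tube contains a summand of $T$; the paper's proof, terse as it is, leans exactly on this (for $\widetilde{D},\widetilde{E}$ it extracts a summand of $T$ in the rank-$2$ tube and works from there), so an argument that ignores this hypothesis is claiming a strictly stronger statement and should be expected to require correspondingly more work. As it stands the proposal is an honest and correct reduction plus a plausible but unexecuted strategy, not a proof.
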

\begin{proof}
It is known that the shift functor $[1]=\tau$
induces an equivalence of cluster categories and maps
cluster-tilting object to another cluster-tilting object. For types $\widetilde{A}$, such $T_i$ is always exists because of  $\delta=(1,\cdots,1)$. Now we consider the types
$\widetilde{D}$ and $\widetilde{E}$. In  these cases,  we can also find such $T_i$, since then the cluster-tilting object $T$
contains at least  an indecomposable direct summand in the exceptional
regular component of rank 2.
\end{proof}

For any object $M\bigoplus \oplus_{i}P_i[1]$ in the cluster category $\mathcal{C}(Q)$ where $M$ is a $\mathbb{C}Q-$module and $P_i$ is a projective $\mathbb{C}Q-$module associated to the vertex $i$,  recall that the dimension vector is defined by
$$\underline{dim} (M\bigoplus \oplus_{i}P_i[1])=\underline{dim} M-\Sigma_{i}\underline{dim} S_i,$$
where $S_i$ is a simple $\mathbb{C}Q-$module associated to the vertex $i$. We are now ready to prove the following  result.

\begin{Prop}\label{Cheby}
Let $Q$ be a quiver of affine  types, then
$$F_{n}(X_{\delta})\in \mathcal{A}(Q)\cap
\mathbb{N}[\mathrm{c}^{\pm 1}]$$ for any cluster $\mathrm{c}$.
\end{Prop}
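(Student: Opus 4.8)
The plan is to prove positivity of $F_n(X_\delta)$ in any cluster by reducing to the case of clusters arising from cluster-tilting objects compatible with an exceptional tube, where Corollary \ref{1} already applies. The key observation is Lemma \ref{lemma}: given any cluster-tilting object $T$ in which every exceptional regular component meets $\add T$, there is a non-regular summand $T_i$ with $\dim_\bbc \ext^1_{\mathcal{C}(Q)}(T_i, M(\delta)) = 1$, where $M(\delta)$ is the quasi-simple module of a homogeneous tube. I would use this $T_i$ to perform a single mutation in direction $i$, obtaining a new cluster-tilting object $T'$ for which the corresponding exceptional component now fails to contain all its quasi-simples among the summands—that is, $T'$ becomes compatible with some exceptional tube $\mathcal{T}$. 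Since mutation is an involution connecting adjacent clusters, it suffices to control how $X_{n\delta} = F_n(X_\delta)$ transforms under this mutation and invoke positivity on the compatible side.

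First I would set up the dichotomy on an arbitrary cluster $\mathrm{c}$, which corresponds to a cluster-tilting object $T$ (every cluster of $\mathcal{A}(Q)$ arises this way via the cluster category, by \cite{BMRRT, CC, Palu1}). If $T$ is already compatible with some exceptional tube, Corollary \ref{1} gives $F_n(X_\delta) \in \mathbb{N}[\mathrm{c}^{\pm 1}]$ directly, so assume not: then every exceptional component contains an indecomposable summand of $T$ as a quasi-simple module, placing us in the hypothesis of Lemma \ref{lemma}. The point of the Ext-dimension-one condition is that the mutation exact sequences at $T_i$ are controlled: the triangle defining the exchange at $T_i$ interacts with $M(\delta)$ through a one-dimensional extension space, which should force a clean multiplication formula for the cluster character under the change of cluster.

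The central computational step is to express $X_{n\delta}$ evaluated in the cluster $\mathrm{c}' = \mathrm{c}_{T'}$ in terms of its expansion in $\mathrm{c}$, using the cluster character machinery and the $\ue$-component decomposition of Definition \ref{9}. Here I expect the main obstacle. The quantity $F_n(X_\delta)$ is a Chebyshev polynomial in a single generalized cluster variable, and the regular modules $M(n\delta)$ in homogeneous tubes are non-rigid, so Theorem \ref{3} (which only guarantees nonnegativity of Grassmannian Euler characteristics for \emph{rigid} objects) does not apply to them directly. Thus I cannot simply assert termwise positivity in the cluster $\mathrm{c}$. Instead the argument must be structural: use the fact that $X_\delta$ is, via the one-dimensional Ext condition and the mutation at $T_i$, either fixed or transformed by a genuine cluster exchange that preserves membership in $\mathbb{N}[(\cdot)^{\pm 1}]$, thereby transporting the positivity guaranteed by Corollary \ref{1} on the compatible cluster $\mathrm{c}'$ back to $\mathrm{c}$. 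Concretely, since $F_n$ is a polynomial with the property that $F_n(X_\delta)$ lies in $\mathcal{A}(Q)$ and the Laurent expansions in adjacent clusters are related by the positive binomial exchange relation, I would argue that positivity of $F_n(X_\delta)$ in $\mathrm{c}'$ together with the explicit exchange relation at vertex $i$ yields positivity in $\mathrm{c}$.

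To close the argument I would verify that the mutation at the distinguished non-regular summand $T_i$ furnished by Lemma \ref{lemma} genuinely produces a cluster-tilting object $T'$ compatible with an exceptional tube—this is where the Ext-dimension-one hypothesis is essential, as it guarantees the exchange triangle is the simplest possible and that $T'$ no longer has a full set of quasi-simples in every exceptional component. Once compatibility of $T'$ is established, Corollary \ref{1} gives $F_n(X_\delta) \in \mathbb{N}[\mathrm{c}'^{\,\pm 1}]$, and running the positive exchange relation backwards across the single mutation completes the proof. I would remark that the restriction to affine types enters twice: once through the classification of regular components into tubes and sheets recalled in Section 2, and once through Lemma \ref{lemma}, whose case analysis over $\widetilde{A}, \widetilde{D}, \widetilde{E}$ supplies the required non-regular summand.
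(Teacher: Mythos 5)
Your reduction to the case where every exceptional tube contains a summand of $T$ (via Corollary \ref{1}), and your identification of Lemma \ref{lemma} as the key supplier of a non-regular summand $T_i$ with $\dim_\bbc\ext^1_{\mathcal{C}(Q)}(T_i,M(\delta))=1$, both match the paper. But the central mechanism you propose --- mutate at $T_i$, get a compatible $T'$, and ``run the positive exchange relation backwards'' --- does not work, for two independent reasons.

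First, mutating at $T_i$ does not produce a cluster-tilting object compatible with an exceptional tube. Incompatibility means some regular summand of $T$ sits as a quasi-simple in each exceptional tube; Lemma \ref{lemma} deliberately hands you a \emph{non-regular} summand $T_i$, so replacing $T_i$ by $T_i^*$ leaves all those quasi-simple regular summands untouched, and $T'$ is still incompatible with every exceptional tube. Second, and more seriously, positivity of a Laurent expansion does not transport across a single mutation: if $F\in\mathbb{N}[\mathrm{c}'^{\pm 1}]$ and $\mathrm{c}$ is adjacent to $\mathrm{c}'$ via $x_ix_i'=M_1+M_2$, substituting $x_i'=(M_1+M_2)/x_i$ puts $(M_1+M_2)$ into denominators and yields neither a Laurent polynomial in $\mathrm{c}$ a priori nor manifest positivity. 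If this transport argument were valid, positivity in a single seed would imply positivity everywhere by connectedness of the exchange graph, which is precisely what makes the positivity conjecture nontrivial.

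The paper's actual argument is different and avoids both problems: using the one-dimensional cluster multiplication formula of Caldero--Keller it establishes, by induction on $n$, the identity $F_n(X_\delta)X_{T_i}=X_{M(\underline{\dim}T_i+n\delta)}+X_{M(\underline{\dim}T_i-n\delta)}$ (or the variant with $M(n\delta-\underline{\dim}T_i)[-1]$ when $\underline{\dim}T_i<n\delta$). Both summands are cluster variables, hence lie in $\mathbb{N}[X_{T_1}^{\pm 1},\dots,X_{T_n}^{\pm 1}]$ by Lee--Schiffler; since $X_{T_i}$ is itself one of the cluster variables of $\mathrm{c}_T$, multiplying by $X_{T_i}^{-1}$ is multiplication by a Laurent monomial in that cluster and preserves positivity. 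This division-by-a-cluster-variable step is the idea your proposal is missing; you would need it (or something equivalent) to close the argument, and your worry that Theorem \ref{3} does not apply to the non-rigid $M(n\delta)$ is resolved by never expanding $F_n(X_\delta)$ directly, only the rigid objects appearing in the product $F_n(X_\delta)X_{T_i}$.
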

\begin{proof}
When $Q$ is a Kronecker quiver, it has been  proved in \cite{SZ}. In the following, we always assume that the affine quiver
$Q$ with at least three vertices.

Let $T=\bigoplus_{i=1}^{n}T_i$ be any cluster-tilting object in
$\mathcal{C}(Q)$, we only need to prove that $F_{n}(X_{\delta})\in
\mathcal{A}(Q)\cap \mathbb{N}[X_{T_1}^{\pm 1},\cdots,X_{T_n}^{\pm
1}]$.

According to Corollary \ref{1}, we only need to prove it for such cluster-tilting object $T$ satisfying that each exceptional regular component $\mathcal{R}$  contains at least an indecomposable direct summand of  $T$.
 By Lemma \ref{lemma}, there exists some $i$ such that
$T_i$ is not a regular module and satisfies that $dim\
Ext_{\mathcal{C}(Q)}^{1}(T_i,M(\delta))=1$. Then by the one dimension cluster multiplication formulas in \cite{CK2006}, we get
$$X_{\delta}X_{T_i}=X_{E}+X_{E'}.$$
Remark that $E$ and $E'$ are both indecomposable rigid objects but not regular 
$\mathbb{C}Q-$modules. For convenience, we denote the rigid object in $\mathcal{C}(Q)$ whose dimension vector $\underline{e}$ by $M(\underline{e})$.
Thus we have $M(\underline{dim}T_i)=T_i$, as  they are both  rigid objects with the same dimension vectors.

We now prove the following claim by induction:
$$F_{n}(X_{\delta})X_{T_i}=X_{M(\underline{dim}T_i+n\delta)}+X_{M(\underline{dim}T_i-n\delta)}\ \ \text{for}\ \ \underline{dim}T_i>n\delta;$$
$$F_{n}(X_{\delta})X_{T_i}=X_{M(\underline{dim}T_i+n\delta)}+X_{M(n\delta-\underline{dim}T_i)[-1]}\ \ \text{for}\ \ \underline{dim}T_i<n\delta.$$
 Firstly suppose that $n=1$, by the above discussions, we have
 $$X_{\delta}X_{T_i}=X_{M(\underline{dim}T_i+\delta)}+X_{M(\underline{dim}T_i-\delta)}\ \ \text{for}\ \ \underline{dim}T_i>\delta;$$
$$X_{\delta}X_{T_i}=X_{M(\underline{dim}T_i+\delta)}+X_{M(\delta-\underline{dim}T_i)[-1]}\ \ \text{for}\ \ \underline{dim}T_i<\delta.$$

Secondly suppose that $n=2$, which need to be divided into the following two cases:

(1) If $\underline{dim}T_i>\delta$, we have
\begin{eqnarray}
\nonumber F_{2}(X_{\delta})X_{T_i} & = & (X_{\delta}^{2}-2)X_{T_i}\\
\nonumber  & = & X_{\delta}(X_{M(\underline{dim}T_i+\delta)}+X_{M(\underline{dim}T_i-\delta)})
-2X_{T_i}\\
\nonumber & = & X_{M(\underline{dim}T_i+2\delta)}+X_{T_i}+X_{\delta}X_{M(\underline{dim}T_i-\delta)}
-2X_{T_i}.
\end{eqnarray}

We compute $X_{\delta}X_{M(\underline{dim}T_i-\delta)},$ which can be solved in the following cases:

In the case that $\underline{dim}T_i>2\delta$, we have
$$X_{\delta}X_{M(\underline{dim}T_i-\delta)}=X_{T_i}+X_{M(\underline{dim}T_i-2\delta)},$$
thus $$F_{2}(X_{\delta})X_{T_i}=X_{M(\underline{dim}T_i+2\delta)}+X_{M(\underline{dim}T_i-2\delta)};$$

In the case that $\underline{dim}T_i<2\delta$, we have
$$X_{\delta}X_{M(\underline{dim}T_i-\delta)}=X_{T_i}+X_{M(2\delta-\underline{dim}T_i)[-1]},$$
thus $$F_{2}(X_{\delta})X_{T_i}=X_{M(\underline{dim}T_i+2\delta)}+X_{M(2\delta-\underline{dim}T_i)[-1]}.$$

(2) If $\underline{dim}T_i<\delta$, we have

\begin{eqnarray}
\nonumber F_{2}(X_{\delta})X_{T_i} & = & (X_{\delta}^{2}-2)X_{T_i}\\
\nonumber  & = & X_{\delta}(X_{M(\underline{dim}T_i+\delta)}+X_{M(\delta-\underline{dim}T_i)[-1]})
-2X_{T_i}\\
\nonumber & = & X_{M(\underline{dim}T_i+2\delta)}+X_{T_i}+X_{M(2\delta-\underline{dim}T_i)[-1]}+X_{T_i}
-2X_{T_i}\\
\nonumber & = & X_{M(\underline{dim}T_i+2\delta)}+X_{M(2\delta-\underline{dim}T_i)[-1]}.
\end{eqnarray}

Now suppose that the above equations  hold for all $k\leq n$, we need to prove  them  for $k=n+1,$ which can be divided into the following two cases:

(1) If  $\underline{dim}T_i>n\delta$, we have

\begin{eqnarray}
\nonumber F_{n+1}(X_{\delta})X_{T_i} & = & (X_{\delta}F_{n}(X_{\delta})-F_{n-1}(X_{\delta}))X_{T_i}\\
\nonumber  & = & X_{\delta}(X_{M(\underline{dim}T_i+n\delta)}+X_{M(\underline{dim}T_i-n\delta)})\\
\nonumber  & - &(X_{M(\underline{dim}T_i+(n-1)\delta)}+X_{M(\underline{dim}T_i-(n-1)\delta)})\\
\nonumber & = & X_{M(\underline{dim}T_i+(n+1)\delta)}+X_{M(\underline{dim}T_i+(n-1)\delta)}+X_{\delta}X_{M(\underline{dim}T_i-n\delta)}\\
\nonumber & - & (X_{M(\underline{dim}T_i+(n-1)\delta)}+X_{M(\underline{dim}T_i-(n-1)\delta)}).
\end{eqnarray}

We need to compute $X_{\delta}X_{M(\underline{dim}T_i-n\delta)}:$

In the case that  $\underline{dim}T_i>(n+1)\delta$, we have
$$X_{\delta}X_{M(\underline{dim}T_i-n\delta)}=X_{M(\underline{dim}T_i-(n-1)\delta)}+X_{M(\underline{dim}T_i-(n+1)\delta)},$$
Thus we have $$F_{n+1}(X_{\delta})X_{T_i}=X_{M(\underline{dim}T_i+(n+1)\delta)}+X_{M(\underline{dim}T_i-(n+1)\delta)};$$

In the case that  $\underline{dim}T_i<(n+1)\delta$, we have
$$X_{\delta}X_{M(\underline{dim}T_i-n\delta)}
=X_{M(\underline{dim}T_i-(n-1)\delta)}+X_{M((n+1)\delta-\underline{dim}T_i)[-1]},$$
thus $$F_{n+1}(X_{\delta})X_{T_i}=X_{M(\underline{dim}T_i+(n+1)\delta)}+X_{M((n+1)\delta-\underline{dim}T_i)[-1]}.$$

(2) If  $\underline{dim}T_i<n\delta$, we have

\begin{eqnarray}
\nonumber F_{n+1}(X_{\delta})X_{T_i} & = & (X_{\delta}F_{n}(X_{\delta})-F_{n-1}(X_{\delta}))X_{T_i}\\
\nonumber  & = & X_{\delta}(X_{M(\underline{dim}T_i+n\delta)}+X_{M(n\delta-\underline{dim}T_i)[-1]})-F_{n-1}(X_{\delta})X_{T_i}\\
\nonumber & = & X_{M(\underline{dim}T_i+(n+1)\delta)}+X_{M(\underline{dim}T_i+(n-1)\delta)}\\
\nonumber & + & X_{\delta}X_{M(n\delta-\underline{dim}T_i)[-1]}
-F_{n-1}(X_{\delta})X_{T_i}.
\end{eqnarray}

When $\underline{dim}T_i>(n-1)\delta$, we have
$$X_{\delta}X_{M(n\delta-\underline{dim}T_i)[-1]}=X_{M((n+1)\delta-\underline{dim}T_i)[-1]}
+X_{M(\underline{dim}T_i-(n-1)\delta)},$$
and
$$F_{n-1}(X_{\delta})X_{T_i}=X_{M(\underline{dim}T_i+(n-1)\delta)}+
X_{M(\underline{dim}T_i-(n-1)\delta)},$$ then we have
$$F_{n+1}(X_{\delta})X_{T_i}=X_{M(\underline{dim}T_i+(n+1)\delta)}+X_{M((n+1)\delta-\underline{dim}T_i)[-1]};$$

When $\underline{dim}T_i<(n-1)\delta$, we obtain
$$X_{\delta}X_{M(n\delta-\underline{dim}T_i)[-1]}=X_{M((n+1)\delta-\underline{dim}T_i)[-1]}
+X_{M((n-1)\delta-\underline{dim}T_i)[-1]},$$
and
$$F_{n-1}(X_{\delta})X_{T_i}=X_{M(\underline{dim}T_i+(n-1)\delta)}+
X_{M((n-1)\delta-\underline{dim}T_i)[-1]},$$ thus we have
$$F_{n+1}(X_{\delta})X_{T_i}=X_{M(\underline{dim}T_i+(n+1)\delta)}+X_{M((n+1)\delta-\underline{dim}T_i)[-1]}.$$
The claim is proved.

Note that $X_{M(\underline{dim}T_i+n\delta)},X_{M(\underline{dim}T_i-n\delta)}$ and $X_{M(n\delta-\underline{dim}T_i)[-1]}$
are all cluster variables, then by \cite{LS}, they belong to $\mathcal{A}(Q)\cap \mathbb{N}[X_{T_1}^{\pm 1},\cdots,X_{T_n}^{\pm
1}]$. Thus, by the above proved claim, we obtain
$$F_{n}(X_{\delta})\in \mathcal{A}(Q)\cap \mathbb{N}[X_{T_1}^{\pm
1},\cdots,X_{T_n}^{\pm 1}].$$
Therefore the result  follows.
\end{proof}

Now we make use of Proposition \ref{Cheby} to prove the main result in this paper.

\begin{Thm}\label{5}
Let $Q$ be a quiver of affine types,  then
$$X_{M}\in \mathcal{A}(Q)\cap
\mathbb{N}[\mathrm{c}^{\pm 1}]$$ for any object $M$ in
$\mathcal{C}(Q)$ and any cluster $\mathrm{c}$.
\end{Thm}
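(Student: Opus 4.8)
The plan is to decompose an arbitrary object $M\in\mathcal{C}(Q)$ into its regular and non-regular (preprojective/preinjective) parts, reduce the non-regular and rigid regular summands to known cases, and handle the non-rigid regular summands—which live in homogeneous or exceptional tubes—via the Chebyshev machinery from Proposition \ref{Cheby}. Since $X^Q_{M\oplus N}=X^Q_M X^Q_N$ and $\mathbb{N}[\mathrm{c}^{\pm1}]$ is closed under products, it suffices to prove positivity for each indecomposable summand of $M$ separately. So the real content is: establish $X_N\in\mathcal{A}(Q)\cap\mathbb{N}[\mathrm{c}^{\pm1}]$ for every indecomposable object $N$ and every cluster $\mathrm{c}$.

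First I would dispose of the rigid objects. If $N$ is an indecomposable rigid object (this includes all preprojective and preinjective indecomposables, $P_i[1]$, and all rigid regular modules sitting in exceptional tubes), then $X_N$ is a cluster variable or a coefficient, and positivity follows directly from the theorem of \cite{LS} cited in the abstract (every skew-symmetric cluster algebra has positive cluster variables). This leaves exactly the non-rigid indecomposable regular modules. These come in two flavors: quasi-simple modules of quasi-length a multiple of the tube rank in exceptional tubes, and the modules $X_{n\delta}$ attached to homogeneous tubes, both of which have dimension vector a multiple of $\delta$ and all of which can be expressed through normalized Chebyshev polynomials $F_n(X_\delta)$ as in the terminology fixed before Definition \ref{9}.

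The key step is then to invoke Proposition \ref{Cheby}, which already gives $F_n(X_\delta)\in\mathcal{A}(Q)\cap\mathbb{N}[\mathrm{c}^{\pm1}]$ for every cluster $\mathrm{c}$ and every $n\geq1$. For a non-rigid indecomposable regular module $N$ of quasi-length $n$ in a homogeneous tube, the standard identification gives $X_N=F_n(X_\delta)$ directly, so positivity is immediate. For a module of quasi-length $rp$ (where $p$ is the rank of an exceptional tube), the known factorization expresses $X_N$ as $F_r(X_\delta)$ times a product of cluster variables coming from the rigid regular modules in that tube; each factor is positive by the previous paragraph and by Proposition \ref{Cheby}, and the product stays in $\mathbb{N}[\mathrm{c}^{\pm1}]$. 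Assembling these pieces over all indecomposable summands of the original $M$ completes the argument.

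I would expect the main obstacle to be the bookkeeping for the exceptional tubes rather than the homogeneous case. In a homogeneous tube the dictionary between $X_N$ and $F_n(X_\delta)$ is clean, but for an exceptional tube of rank $p>1$ one must correctly express the generalized cluster variable of a non-rigid regular module in terms of $F_r(X_\delta)$ and the rigid quasi-simples, and verify that the resulting combination is manifestly a positive $\mathbb{N}$-combination in every cluster $\mathrm{c}$. The subtlety is that positivity must hold simultaneously for \emph{all} clusters, not just those compatible with the tube in the sense of Definition \ref{9}; this is precisely where Proposition \ref{Cheby} does the heavy lifting, since it already removes the compatibility hypothesis present in Corollary \ref{1} and \cite[Corollary 5.5]{Dupont}. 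Once that proposition is in hand, the theorem is a short structural consequence.
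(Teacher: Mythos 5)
Your overall strategy matches the paper's: reduce to indecomposable summands via $X_{M\oplus N}=X_MX_N$, dispose of rigid objects by the positivity of cluster variables from \cite{LS}, and handle the non-rigid regular indecomposables using Proposition \ref{Cheby}. However, the two identities you rely on in the last step are not correct as stated, and one class of modules is omitted.

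First, for a quasi-length $n$ module in a homogeneous tube it is \emph{not} true that $X_N=F_n(X_\delta)$; the correct relation is $X_{n\delta}=F_n(X_\delta)+X_{(n-2)\delta}$ (this is exactly why the paper uses the \emph{normalized Chebyshev polynomials of the first kind}, characterized by $F_n(t+t^{-1})=t^n+t^{-n}$, rather than those of the second kind). Positivity of $X_{n\delta}$ therefore requires a short induction on $n$, not a direct identification. This is easy to repair, but as written the claim is false.

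Second, and more seriously, in an exceptional tube of rank $r$ the non-rigid indecomposables are all modules $E_i[\ell]$ with quasi-length $\ell\geq r$, not only those with $\ell$ a multiple of $r$; your argument says nothing about $E_i[nr+k]$ with $0<k\leq r-1$. Moreover, the ``known factorization'' you invoke does not exist: $X_{E_i[nr+k]}$ is not a product of $F_n(X_\delta)$ with cluster variables. The correct tool is Dupont's difference property
$$X_{E_{i}[nr+k]}=X_{E_{i}[k]}F_{n}(X_{\delta})+X_{E_{i+k+1}[nr-k-2]},$$
which expresses $X_{E_i[nr+k]}$ as a \emph{sum} of a product of positive elements and another regular generalized cluster variable of strictly smaller quasi-length; positivity then follows by induction (with the base observation $X_{\delta}X_{E_{i}[r-1]}=X_{E_{i}[2r-1]}$). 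Without this identity, or something equivalent, your exceptional-tube case does not go through. The correct intuition you do have is that Proposition \ref{Cheby} is what removes the compatibility hypothesis of Corollary \ref{1}; but the bookkeeping you flagged as the ``main obstacle'' is precisely where the proposal breaks down.
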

\begin{proof}
For any object $M,N\in \mathcal{C}(Q)$, we have $X_{M\oplus N}=X_{M}X_{N},$ and also it  is well-known that cluster variables are positive,
so we only need to prove the theorem for any indecomposable regular
generalized variables.

Firstly, we consider the case in homogenerous
tubes. Note that
$X_{n\delta}=F_{n}(X_{\delta})+X_{(n-2)\delta}$, thus we can prove
$X_{n\delta}\in \mathbb{N}[\mathrm{c}^{\pm 1}]$ by induction.

Secondly, we consider the case in non-homogenerous tubes. We fix a non-homogenerous tube $\mathcal{T}$ of rank $r$. The quasi-simples of
$\mathcal{T}$ are denoted
by $E_{i}$ with $1\leq i\leq r$ ordered so that $\tau E_{i}=E_{i-1}$.  The regular module with quasi-socle $E$ and quasi-length $k$ for any $k\in \mathbb{N}$ are denoted
by $E[k]$.
According to the
general different property (see \cite[Theorem 3.4]{Dupont2}):
$$X_{E_{i}[nr+k]}=X_{E_{i}[k]}F_{n}(X_{\delta})+X_{E_{i+k+1}[nr-k-2]}$$
where $r$ is the rank of an exceptional tube, $n\geq 0$ and $0\leq k
\leq r-1,$
 and
the positivity of $F_{n}(X_{\delta})$ which is proved in Proposition \ref{Cheby}, we can deduce
$X_{E_{i}[nr+k]}\in \mathcal{A}(Q)\cap
\mathbb{N}[\mathrm{c}^{\pm 1}]$ by induction. Here we only need
to note that $X_{\delta}X_{E_{i}[r-1]}=X_{E_{i}[2r-1]}$ where $0\leq
i \leq r-1.$
\end{proof}

Let $Q$ be a quiver of affine types, then we have the following three integral bases of the cluster algebras $\mathcal{A}(Q)$ (see \cite{DXX,Dupont1,Dupont2}):
$$\mathcal{B}=\mathcal{CM}\cup \{F_{n}(X_{\delta})X_R|R\ \text{is a regular rigid kQ-module and}\ n\geq 1\}$$
$$\mathcal{S}=\mathcal{CM}\cup \{X_{n\delta}X_R|R\ \text{is a regular rigid kQ-module and}\ n\geq 1\}$$
$$\mathcal{G}=\mathcal{CM}\cup \{X_{\delta}^{n}X_R|R\ \text{is a regular rigid kQ-module and}\ n\geq 1\}$$
where  we denote the set of all cluster monomials of the cluster algebras $\mathcal{A}(Q)$ by $\mathcal{CM}$.

 We can now put all these results together  to obtain:
\begin{Cor}\label{basis}
Let $Q$ be a quiver of affine types and $\mathrm{c}$ be any cluster, then we have

$(1)\ \mathcal{B}\in \mathcal{A}(Q)\cap
\mathbb{N}[\mathrm{c}^{\pm 1}];$

$(2)\ \mathcal{S}\in \mathcal{A}(Q)\cap
\mathbb{N}[\mathrm{c}^{\pm 1}];$

$(3)\ \mathcal{G}\in \mathcal{A}(Q)\cap
\mathbb{N}[\mathrm{c}^{\pm 1}];$

\end{Cor}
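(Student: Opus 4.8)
The plan is to reduce all three assertions to the positivity statements already proved, using only the elementary fact that, for a fixed cluster $\mathrm{c}$, the set $\mathbb{N}[\mathrm{c}^{\pm 1}]$ of Laurent polynomials with non-negative integer coefficients is closed under both addition and multiplication. Consequently, to show that a family of elements of $\mathcal{A}(Q)$ lies in $\mathcal{A}(Q)\cap\mathbb{N}[\mathrm{c}^{\pm 1}]$, it suffices to write each element as a product of factors each of which is already known to lie in $\mathcal{A}(Q)\cap\mathbb{N}[\mathrm{c}^{\pm 1}]$. Since each of the three bases is the union of the set $\mathcal{CM}$ of cluster monomials with one further family of elements, I would first settle the part common to all three, and then treat the extra generators of $\mathcal{B}$, $\mathcal{S}$ and $\mathcal{G}$ in turn.

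For the cluster monomials: a cluster monomial is by definition a product of cluster variables lying in a common cluster, and every cluster variable of a skew-symmetric cluster algebra lies in $\mathbb{N}[\mathrm{c}^{\pm 1}]$ by \cite{LS}; closure under multiplication then gives $\mathcal{CM}\subset\mathcal{A}(Q)\cap\mathbb{N}[\mathrm{c}^{\pm 1}]$. The same remark handles the factor $X_R$ occurring in every extra generator: when $R$ is a regular rigid $kQ$-module, $X_R$ is a cluster monomial, so by the previous sentence $X_R\in\mathcal{A}(Q)\cap\mathbb{N}[\mathrm{c}^{\pm 1}]$ (alternatively, this is the special case of Theorem \ref{5} applied to the object $R$ of $\mathcal{C}(Q)$).

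With these in hand I would dispose of the three families. For $\mathcal{B}$, each extra element $F_{n}(X_{\delta})X_R$ with $n\geq 1$ is a product of $F_{n}(X_{\delta})\in\mathcal{A}(Q)\cap\mathbb{N}[\mathrm{c}^{\pm 1}]$, supplied by Proposition \ref{Cheby}, with the positive factor $X_R$, so it lies in $\mathcal{A}(Q)\cap\mathbb{N}[\mathrm{c}^{\pm 1}]$. For $\mathcal{S}$, each extra element $X_{n\delta}X_R$ is a product of $X_{n\delta}$, which lies in $\mathcal{A}(Q)\cap\mathbb{N}[\mathrm{c}^{\pm 1}]$ by Theorem \ref{5} applied to the indecomposable regular module of quasi-length $n$ in a homogeneous tube, with the positive factor $X_R$. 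For $\mathcal{G}$, Theorem \ref{5} applied to a quasi-simple regular module of a homogeneous tube gives $X_{\delta}=X_{1\delta}\in\mathcal{A}(Q)\cap\mathbb{N}[\mathrm{c}^{\pm 1}]$, so each power $X_{\delta}^{n}$ and its product $X_{\delta}^{n}X_R$ remain in that set. Taking unions with $\mathcal{CM}$ yields $(1)$, $(2)$ and $(3)$ simultaneously. I expect no substantial obstacle, as every piece of genuine positivity content has already been absorbed into Proposition \ref{Cheby} and Theorem \ref{5}; the only point needing a word of care is the observation that $R$ being a regular \emph{rigid} module makes $X_R$ a cluster monomial, so that its positivity is immediately available rather than requiring a separate argument.
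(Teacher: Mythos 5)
Your proposal is correct and follows essentially the same route as the paper: the cluster monomials and the factors $X_R$ are handled by the positivity of cluster variables from \cite{LS}, the extra generators of $\mathcal{B}$ by Proposition \ref{Cheby}, and those of $\mathcal{S}$ and $\mathcal{G}$ by Theorem \ref{5}, all combined via closure of $\mathbb{N}[\mathrm{c}^{\pm 1}]$ under multiplication. The paper's proof is just a terser statement of exactly this argument.
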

\begin{proof}
By Proposition \ref{Cheby} and the positivity in cluster variables, we obtain that $$\mathcal{B}\in \mathcal{A}(Q)\cap
\mathbb{N}[\mathrm{c}^{\pm 1}].$$
By Theorem \ref{5} and the positivity in cluster variables, we obtain that
$$\mathcal{S}\in \mathcal{A}(Q)\cap
\mathbb{N}[\mathrm{c}^{\pm 1}]\ \text{and}\ \mathcal{G}\in \mathcal{A}(Q)\cap
\mathbb{N}[\mathrm{c}^{\pm 1}].$$
The result follows.
\end{proof}

\begin{Remark}
(1)\ The  basis $\mathcal{B}$ was initially constructed  for rank 2 cluster algebras of finite and affine types in \cite{SZ} and  for  type $\widetilde{A}_2^{(1)}$ in \cite{GC} where are called canonical bases, and then constructed  for   types $A$ and $\widetilde A$ in \cite{Dupont-h} where are called the atomic basis.

(2)\ The  basis $\mathcal{B}$ was  constructed  for the
Kronecker quiver \cite{CZ} where is called the dual semicanonical basis.

(3)\ The  basis $\mathcal{G}$ was  constructed  in \cite{Dupont1} for type $\widetilde{A}$
and \cite{DXX} for affine types, and  for more general case in \cite{GLSg} where are called the generic basis.
\end{Remark}

\section{An example:  type $\widetilde{D}_{4}$}

We consider  the tame quiver $Q$ of type
$\widetilde{D}_4$ as follows
$$
\xymatrix{& 2 \ar[d] &\\
3 \ar[r] & 1  & 5 \ar[l]\\
& 4 \ar[u] &}
$$
 In this case, we will  provide an explicit description of the proof in Proposition \ref{Cheby}.

The category of regular modules decomposes into a direct sum of
tubes indexed by the projective line $\mathbb{P}^1$ among which
there are just three tubes of rank 2 and all other tubes are
homogeneous tubes \cite{DR76}. We denote these three exceptional
tubes labelled by the subset $\{0,1,\infty\}$ of $\mathbb{P}^1$. The
quasi-simple modules in non-homogeneous tubes are denoted by
$E_1,E_2,E_3,E_4,E_5,E_6$, where
$$\underline{\mathrm{dim}}(E_1)=(1,1,1,0,0),\,\,\underline{\mathrm{dim}}(E_2)=(1,0,0,1,1),\,\,
\underline{\mathrm{dim}}(E_3)=(1,1,0,1,0),$$$$
\underline{\mathrm{dim}}(E_4)=(1,0,1,0,1),\,\,
\underline{\mathrm{dim}}(E_5)=(1,0,1,1,0),\,\,\underline{\mathrm{dim}}(E_6)=(1,1,0,0,1).$$
We remark that $\{E_1, E_2\}$, $\{E_3, E_4\}$ and $\{E_5, E_6\}$ are
pairs of the quasi-simple modules at the mouth of exceptional tubes
labelled by $1,\infty$ and $0$, respectively.
 We denote the minimal imaginary root
by $\delta=(2,1,1,1,1)$.
From the Auslander-Reiten quiver of type $\widetilde{D}_4$, we know
that all preprojective modules are the following forms:

1): $C(n)$ with
$\underline{\mathrm{dim}}C(n)=(2n-1,n-1,n-1,n-1,n-1)$, where
$n\geq 1$.

2):  $M_1(n),M_2(n),M_3(n),M_4(n)$ with
$$\underline{\mathrm{dim}}M_1(n)=(n,\frac{n+1}{2},\frac{n-1}{2},\frac{n-1}{2},\frac{n-1}{2}),$$
$$\underline{\mathrm{dim}}M_2(n)=(n,\frac{n-1}{2},\frac{n+1}{2},\frac{n-1}{2},\frac{n-1}{2}),$$
$$\underline{\mathrm{dim}}M_3(n)=(n,\frac{n-1}{2},\frac{n-1}{2},\frac{n+1}{2},\frac{n-1}{2}),$$
$$\underline{\mathrm{dim}}M_4(n)=(n,\frac{n-1}{2},\frac{n-1}{2},\frac{n-1}{2},\frac{n+1}{2}),$$
where $n$ is odd and $n\geq 1.$

3):  $N_1(n),N_2(n),N_3(n),N_4(n)$ with :
$$\underline{\mathrm{dim}}N_1(n)=(n,\frac{n-2}{2},\frac{n}{2},\frac{n}{2},\frac{n}{2}),$$
$$\underline{\mathrm{dim}}N_2(n)=(n,\frac{n}{2},\frac{n-2}{2},\frac{n}{2},\frac{n}{2}),$$
$$\underline{\mathrm{dim}}N_3(n)=(n,\frac{n}{2},\frac{n}{2},\frac{n-2}{2},\frac{n}{2}),$$
$$\underline{\mathrm{dim}}N_4(n)=(n,\frac{n}{2},\frac{n}{2},\frac{n}{2},\frac{n-2}{2}),$$
where $n$ is even and $n\geq 2$. Note that
$$C(1)=P_1,M_1(1)=P_2,M_2(1)=P_3,M_3(1)=P_4,M_4(1)=P_5.$$
The Auslander-Reiten quiver of the preprojective component is as
follows:
$$
\xymatrix{                                 & M_1(1) \ar[rdd]& &N_1(2) \ar[rdd]& & &\\
                                           & M_2(1) \ar[rd] & &N_2(2) \ar[rd] & & &\\
       C(1)\ar[ruu]\ar[ru]\ar[rd]\ar[rdd] &                 &C(2) \ar[ruu]\ar[ru]\ar[rd]\ar[rdd]& &C(3) \ar[ruu]\ar[ru]\ar[rd]\ar[rdd]& \cdots \\
       & M_3(1) \ar[ru]& &N_3(2) \ar[ru]& & &\\
       & M_4(1) \ar[ruu]& &N_4(2) \ar[ruu]& &&}
$$

All preinjective modules are the following forms:

1): $C'(n)$ with $\underline{\mathrm{dim}}C'(n)=(2n-1,n,n,n,n),$
where $n\geq 1$.

2):  $M'_1(n),M'_2(n),M'_3(n),M'_4(n)$ with When
$$\underline{\mathrm{dim}}M'_1(n)=(n-1,\frac{n+1}{2},\frac{n-1}{2},\frac{n-1}{2},\frac{n-1}{2}),$$
$$\underline{\mathrm{dim}}M'_2(n)=(n-1,\frac{n-1}{2},\frac{n+1}{2},\frac{n-1}{2},\frac{n-1}{2}),$$
$$\underline{\mathrm{dim}}M'_3(n)=(n-1,\frac{n-1}{2},\frac{n-1}{2},\frac{n+1}{2},\frac{n-1}{2}),$$
$$\underline{\mathrm{dim}}M'_4(n)=(n-1,\frac{n-1}{2},\frac{n-1}{2},\frac{n-1}{2},\frac{n+1}{2}),$$
where $n$ is odd and $n\geq 1$.

3):  $N'_1(n),N'_2(n),N'_3(n),N'_4(n)$ with
$$\underline{\mathrm{dim}}N'_1(n)=(n-1,\frac{n-2}{2},\frac{n}{2},\frac{n}{2},\frac{n}{2}),$$
$$\underline{\mathrm{dim}}N'_2(n)=(n-1,\frac{n}{2},\frac{n-2}{2},\frac{n}{2},\frac{n}{2}),$$
$$\underline{\mathrm{dim}}N'_3(n)=(n-1,\frac{n}{2},\frac{n}{2},\frac{n-2}{2},\frac{n}{2}),$$
$$\underline{\mathrm{dim}}N'_4(n)=(n-1,\frac{n}{2},\frac{n}{2},\frac{n}{2},\frac{n-2}{2}),$$
where $n$ is even and $n\geq 2$.  Note that
$$C'(1)=I_1,M'_1(1)=I_2,M'_2(1)=I_3,M'_3(1)=I_4,M'_4(1)=I_5.$$
The Auslander-Reiten quiver of the preinjective component is as
follows:
$$
\xymatrix{  \ar[rdd]&                               & N'_1(2) \ar[rdd]& &M'_1(1) \\
           \ar[rd]  &                              & N'_2(2) \ar[rd] & &M'_2(1) \\
  \cdots    & C'(2)\ar[ruu]\ar[ru]\ar[rd]\ar[rdd] &                 &C'(1) \ar[ruu]\ar[ru]\ar[rd]\ar[rdd]\\
  \ar[ru] &    & N'_3(2) \ar[ru]& &M'_3(1) \\
   \ar[ruu] &   & N'_4(2) \ar[ruu]& &M'_4(1)}
$$
The following result  proved in \cite{DX}  is  useful for us to get the positivity.
\begin{Prop}\cite{DX}\label{D4}
Assume that $n\geq 1$, then we have

(1) If $n=1$, then
$$\ X_{\delta}X_{P_2}=X_{M_1(3)}+X_{M'_{1}(1)};$$

(2) If $n\geq 3$ is odd, then
$$\ X_{\delta}X_{M_1(n)}=X_{M_1(n+2)}+X_{M_1(n-2)}.$$
\end{Prop}
 Similar to Proposition \ref{D4}(2), it is easy to show  the following result directly.

 \begin{Prop}\label{D4+}
If $n\geq 3$ is odd, then
$$\ X_{\delta}X_{M'_1(n)}=X_{M'_1(n+2)}+X_{M'_1(n-2)}.$$
\end{Prop}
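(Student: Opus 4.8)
The plan is to mirror the proof of Proposition \ref{D4}(2), replacing the preprojective modules $M_1(n)$ by the preinjective modules $M'_1(n)$ and checking that the one-dimensional cluster multiplication formula of \cite{CK2006} applies in exactly the same way. The only genuinely new input is an $\mathrm{Ext}$-computation showing that $M(\delta)$ and $M'_1(n)$ are linked by a one-dimensional extension space in the cluster category; everything after that is the same dimension-vector bookkeeping already carried out in Proposition \ref{Cheby}.

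First I would verify that $\dim_{\mathbb{C}}\mathrm{Ext}^1_{\mathcal{C}(Q)}(M(\delta), M'_1(n)) = 1$. Since $M'_1(n)$ is preinjective and $M(\delta)$ is regular, $\mathrm{Hom}_A(M'_1(n), M(\delta)) = 0$; using that the tube is homogeneous, so $\tau M(\delta) = M(\delta)$, the Auslander--Reiten formula gives $\mathrm{Ext}^1_A(M(\delta), M'_1(n)) \cong D\mathrm{Hom}_A(M'_1(n), \tau M(\delta)) = D\mathrm{Hom}_A(M'_1(n), M(\delta)) = 0$. Hence $\mathrm{Ext}^1_{\mathcal{C}(Q)}(M(\delta), M'_1(n)) \cong \mathrm{Ext}^1_A(M'_1(n), M(\delta))$, and the dimension of the latter is read off from the Euler form: with $\underline{\mathrm{dim}}M'_1(n) = (n-1, \tfrac{n+1}{2}, \tfrac{n-1}{2}, \tfrac{n-1}{2}, \tfrac{n-1}{2})$ and $\delta = (2,1,1,1,1)$ a direct computation gives $\langle \underline{\mathrm{dim}}M'_1(n), \delta\rangle = -1$, so $\dim_{\mathbb{C}}\mathrm{Ext}^1_A(M'_1(n), M(\delta)) = 1$ because the corresponding $\mathrm{Hom}$ vanishes.

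With the extension space one-dimensional, the cluster multiplication formula of \cite{CK2006} applies and yields $X_{\delta} X_{M'_1(n)} = X_B + X_{B'}$; as in the proof of Proposition \ref{Cheby}, the two middle terms carry dimension vectors $\underline{\mathrm{dim}}M'_1(n) + \delta$ and $\underline{\mathrm{dim}}M'_1(n) - \delta$. Comparing with the tabulated preinjective dimension vectors gives $\underline{\mathrm{dim}}M'_1(n) + \delta = \underline{\mathrm{dim}}M'_1(n+2)$ and $\underline{\mathrm{dim}}M'_1(n) - \delta = \underline{\mathrm{dim}}M'_1(n-2)$; since $n \geq 3$ we have $n - 2 \geq 1$, so both middle terms are genuine preinjective modules and no shifted object of the form $[-1]$ occurs. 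The $+\delta$ term is realized by the non-split short exact sequence $0 \to M(\delta) \to M'_1(n+2) \to M'_1(n) \to 0$ arising from the one-dimensional group $\mathrm{Ext}^1_A(M'_1(n), M(\delta))$, while the $-\delta$ term is its Serre-dual partner, the quasi-socle neighbour appearing in $0 \to M(\delta) \to M'_1(n) \to M'_1(n-2) \to 0$; both sequences can be read directly off the preinjective component of the Auslander--Reiten quiver displayed above.

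The hard part will be the final identification step: one must confirm that $B$ and $B'$ are precisely the indecomposable preinjective modules $M'_1(n+2)$ and $M'_1(n-2)$, and not some other, a priori possibly decomposable, objects sharing the same dimension vectors. This follows because each of $\underline{\mathrm{dim}}M'_1(n\pm 2)$ is a positive real root, hence the dimension vector of a unique indecomposable (necessarily rigid, preinjective) module, and the middle terms of the two non-split triangles are indecomposable preinjective modules by the local shape of the displayed Auslander--Reiten component. Once this is pinned down, the remaining verification is identical to the bookkeeping already done for $M_1(n)$ in Proposition \ref{D4}(2), which is exactly why the statement can be obtained \emph{directly}.
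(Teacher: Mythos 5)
Your proposal is correct and follows exactly the route the paper intends: the paper gives no written argument beyond ``similar to Proposition \ref{D4}(2), \ldots directly,'' and your computation ($\langle \underline{\mathrm{dim}}M'_1(n),\delta\rangle=-1$ plus vanishing of the opposite $\mathrm{Hom}$ and $\mathrm{Ext}$ spaces, then the one-dimensional multiplication formula of \cite{CK2006} and matching of dimension vectors) is precisely that direct verification, mirroring the preprojective case of \cite{DX}. The only step deserving slightly more care is the one you already flag, namely that the two middle terms are the indecomposables $M'_1(n\pm 2)$ rather than decomposable objects with the same dimension vectors; your real-root/AR-quiver justification is the standard way to settle this and is exactly parallel to what is implicitly used for $M_1(n)$ in Proposition \ref{D4}(2).
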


For any  cluster-tilting object $T$, it is easy to see that   there exists at least  a direct summand $T_i$ of $T$ such that
$\tau^{m}T_i$ is equal to some $P_{k}$ for $2\leq k\leq 5$ and some certain $m\in\mathbb{Z}$. Note that  $\tau$
induces an equivalence of cluster categories and maps
cluster-tilting object to another cluster-tilting object.
Without loss of generality, we can assume that $T_i=P_2$, then we have the following result.
\begin{Prop}\label{D4cheby}
Assume that $n\geq 1$, then we have $$F_{n}(X_{\delta})X_{P_2}=X_{M_1(2n+1)}+X_{M'_{1}(2n-1)}.$$
\end{Prop}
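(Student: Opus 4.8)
The plan is to prove Proposition \ref{D4cheby} by induction on $n$, exactly mirroring the general argument in Proposition \ref{Cheby} but with all abstract dimension vectors replaced by the explicit labels of the type $\widetilde{D}_4$ Auslander--Reiten quiver. The key observation that makes this work is that $P_2 = M_1(1)$, so that $\underline{\dim}P_2 = (1,1,0,0,0)$ while $\delta = (2,1,1,1,1)$; in particular $\underline{\dim}P_2 < \delta$, which places us in the second of the two recursive regimes from Proposition \ref{Cheby}. The role of the abstract rigid objects $M(\underline{\dim}T_i + n\delta)$ and $M(n\delta - \underline{\dim}T_i)[-1]$ will be played concretely by the preprojective modules $M_1(2n+1)$ and the preinjective modules $M_1'(2n-1)$, respectively. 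One should first check the dimension-vector bookkeeping: using $\underline{\dim}M_1(2n+1) = (2n+1, n+1, n, n, n)$ one verifies $\underline{\dim}M_1(2n+1) = \underline{\dim}P_2 + n\delta$, and using $\underline{\dim}M_1'(2n-1) = (2n-2, n, n-1, n-1, n-1)$ one verifies $\underline{\dim}M_1'(2n-1) = n\delta - \underline{\dim}P_2$ (up to the shift convention), so the claimed formula is the specialization of the general claim.

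First I would establish the base case $n=1$, which is precisely Proposition \ref{D4}(1): $X_\delta X_{P_2} = X_{M_1(3)} + X_{M_1'(1)}$, noting that $F_1(X_\delta) = X_\delta$, $M_1(2\cdot 1 + 1) = M_1(3)$ and $M_1'(2\cdot 1 - 1) = M_1'(1)$. For the inductive step I would use the Chebyshev recursion $F_{n+1}(X_\delta) = X_\delta F_n(X_\delta) - F_{n-1}(X_\delta)$, multiply through by $X_{P_2}$, and substitute the inductive hypotheses at levels $n$ and $n-1$. This reduces the problem to computing the single product $X_\delta X_{M_1(2n+1)}$ and the single product $X_\delta X_{M_1'(2n-1)}$, since the $F_{n-1}$ term expands to $X_{M_1(2n-1)} + X_{M_1'(2n-3)}$. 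The product $X_\delta X_{M_1(2n+1)} = X_{M_1(2n+3)} + X_{M_1(2n-1)}$ is exactly Proposition \ref{D4}(2), and the product $X_\delta X_{M_1'(2n-1)} = X_{M_1'(2n+1)} + X_{M_1'(2n-3)}$ is exactly Proposition \ref{D4+}. Substituting these, the terms $X_{M_1(2n-1)}$ and $X_{M_1'(2n-3)}$ cancel against the $-F_{n-1}(X_\delta)X_{P_2}$ contribution, leaving precisely $X_{M_1(2n+3)} + X_{M_1'(2n+1)} = X_{M_1(2(n+1)+1)} + X_{M_1'(2(n+1)-1)}$, as desired.

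I expect the main obstacle to be purely organizational rather than conceptual: one must be careful about which recursion is being applied at small $n$, since Propositions \ref{D4}(2) and \ref{D4+} are stated only for odd index $n \geq 3$, and the indices appearing here are $2n+1$ and $2n-1$, which are odd and at least $3$ precisely when $n \geq 1$ and $n \geq 2$ respectively. Thus the product $X_\delta X_{M_1'(2n-1)}$ via Proposition \ref{D4+} is only directly available for $2n-1 \geq 3$, i.e.\ $n \geq 2$; the passage from $n=1$ to $n=2$ therefore requires treating $X_\delta X_{M_1'(1)}$ as a separate small computation (one reads off the relevant one-dimensional multiplication from the preinjective AR-quiver, where $M_1'(1) = I_2$ sits at the mouth). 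Once this low-index boundary is handled, the induction for $n \geq 2$ runs uniformly. The verification that the cancellation is exact, and that no spurious terms survive, is the only point demanding genuine attention, but it is a direct consequence of the telescoping built into the Chebyshev recursion together with the two product formulas.
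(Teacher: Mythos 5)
Your proposal is correct and follows essentially the same route as the paper: induction via the Chebyshev recursion, with Proposition \ref{D4}(1) as the base case and Propositions \ref{D4}(2) and \ref{D4+} supplying the two products $X_{\delta}X_{M_1(2n+1)}$ and $X_{\delta}X_{M'_1(2n-1)}$. You also correctly anticipate the one genuine wrinkle --- that $X_{\delta}X_{M'_1(1)}$ is not covered by Proposition \ref{D4+} and forces a separate computation at $n=2$ --- which is exactly how the paper handles it, by adding $n=2$ as a second base case.
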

\begin{proof}
We  prove it by induction.
When $n=1,$ it follows from the Proposition \ref{D4}(1). When $n=2$, note that $P_2=M_{1}(1)$, we have
\begin{eqnarray}
\nonumber F_{2}(X_{\delta})X_{P_2} & = & (X^{2}_{\delta}-2)X_{P_2}\\
\nonumber  & = & X_{\delta}(X_{M_1(3)}+X_{M'_{1}(1)})-2X_{P_2}\\
\nonumber & = & X_{M_1(5)}+X_{M_{1}(1)}+X_{M'_1(3)}+X_{M_{1}(1)}-2X_{P_2}\\
\nonumber & = & X_{M_1(5)}+X_{M'_1(3)}.
\end{eqnarray}
Now suppose that it holds for $n\leq k$. When $n=k+1$, then by Proposition \ref{D4} and Proposition \ref{D4+},
we have
\begin{eqnarray}
\nonumber F_{k+1}(X_{\delta})X_{P_2} & = & (X_{\delta}F_{k}(X_{\delta})-F_{k-1}(X_{\delta}))X_{P_2}\\
\nonumber  & = & X_{\delta}(X_{M_1(2k+1)}+X_{M'_{1}(2k-1)})-(X_{M_1(2k-1)}+X_{M'_{1}(2k-3)})\\
\nonumber & = & X_{M_1(2k+3)}+X_{M_{1}(2k-1)}+X_{M'_1(2k+1)}+X_{M'_{1}(2k-3)}\\
\nonumber & - &(X_{M_1(2k-1)}+X_{M'_{1}(2k-3)})\\
\nonumber & = & X_{M_1(2k+3)}+X_{M'_1(2k+1)}.
\end{eqnarray}
Thus the proof is finished.
\end{proof}

It is obvious to see that $X_{M_1(2n+1)}$ and $X_{M'_{1}(2n-1)}$
are all cluster variables, so both of them belong to $\mathcal{A}(Q)\cap \mathbb{N}[X_{T_1}^{\pm 1},\cdots,X_{T_n}^{\pm
1}]$. Note  that $X_{P_2}=X_{T_i}$, then by Proposition \ref{D4cheby}, we get
$$F_{n}(X_{\delta})\in \mathcal{A}(Q)\cap \mathbb{N}[X_{T_1}^{\pm
1},\cdots,X_{T_n}^{\pm 1}].$$


\begin{thebibliography}{10}
\bibitem{BMRRT} A. Buan, R. Marsh, M. Reineke, I. Reiten
and G. Todorov, \emph{Tilting theory and cluster combinatorics.}
Advances in Math. \textbf{204} (2006), 572-618.



\bibitem{CC} P. Caldero and F. Chapoton, \emph{Cluster algebras as Hall algebras of quiver
representations}, Comm. Math. Helvetici, \textbf{81} (2006), no. 3,
595-616.



\bibitem{CK2006}P. Caldero and B. Keller, \emph{From triangulated categories to
cluster algebras II}, Ann. Sci. $\acute{E}$cole Norm. Sup.,
\textbf{39} (4)  (2006), no.6, 983-1009.

\bibitem{CZ}
P. Caldero and A. Zelevinsky, \emph{Laurent expansions in cluster
algebras via quiver representations}, Moscow Math. J. \textbf{6}
(2006), no. 2, 411-429.

\bibitem{Dav}
B. Davison, \emph{Positivity for quantum cluster algebras}, arXiv:1601.07918v3 [math.RT].


\bibitem{DR76} V. Dlab and C. M. Ringel,\emph{Indecomposable representations of graphs and algebras},
Memories of the AMS, \textbf{173} (1976), 1-57.

\bibitem{DX}  M.Ding and F.Xu, \emph{A $\mathbb{Z}$-basis for the cluster algebra
of type $\widetilde{D}_4$},  Algebra Colloquium 19: 4 (2012) 591-610.

\bibitem{DXX}
 M. Ding, J. Xiao and F. Xu, \emph{Integral bases of cluster algebras and
representations of tame quivers}, Algebr Represent Theor (2013) 16:491-525.

\bibitem{Dupont0}
G. Dupont,\emph{Positivity in coefficient-free rank two cluster
algebras}, Electron. J. Comb. 16(1), MR2529807 (2009).

\bibitem{Dupont1}
G.Dupont, \emph{Generic variables in acyclic cluster algebras},
J. Pure and Appl. Alg., 215, 628-641 (2011).


\bibitem{Dupont2} G.Dupont,\emph{Transverse quiver grassmannians and bases in affine cluster algebras}, Algebra and Number Theory 4(5), 599-624 (2010).

\bibitem{Dupont} G.Dupont,\emph{Generalized Chebyshev polynomials and positivity for regular cluster characters}, arXiv:0911.0714v1 [math.RT].

\bibitem{Dupont-h} G.Dupont and H.Thomas,
\emph{Atomic bases in cluster algebras of types A and $\widetilde
A$}, Proc. Lond. Math. Soc. (3) 107 (2013), no. 4, 825-850.


\bibitem{FZ}
S. Fomin and A. Zelevinsky, \emph{Cluster algebras. I.
Foundations.} J. Amer. Math. Soc. \textbf{15} (2002), no. 2,
497-529.
\bibitem{GC} G. Cerulli Irelli, \emph{Cluster algebras of type  $\widetilde{A}_2^{(1)}$}, Algebra Represent Theor., 2012, 15(5): 977-1021.



\bibitem{GLS}
C.~Geiss, B.~Leclerc, and J.~Schr\"oer, \emph{Cluster algebra
structures and semicanoncial bases for unipotent groups},
arXiv:math/0703039v2, 2008.

 \bibitem{GLSg}
C.~Geiss, B.~Leclerc, and J.~Schr\"oer, \emph{Generic bases for cluster algebras
and the Chamber Ansatz}, J. Amer. Math. Soc., 25 (2012), 21¨C76.

\bibitem{KQ}
Y.~ Kimura and F.~Qin, \emph{Graded quiver varieties, quantum cluster algebras, and dual canonical basis},
Adv. Math. 262, 10, (2014), 261¨C312.

\bibitem{LS}
K.~Lee and R.~Schiffler, \emph{Positivity for cluster algebras},
Ann. of Math. (2) 182 (2015), no. 1, 73-125.

\bibitem{MP}
G.Musiker and J. Propp, \emph{Combinatorial interpretations for
rank-two cluster algebras of affine type}, Electron. J. Comb. 14, MR2285819 (2006).


\bibitem{MSW}
G.Musiker, R.Schiffler and L.Williams, \emph{Positivity for cluster
algebras from surfaces}, Advances in Math. 227 (2011) 2241-2308.

\bibitem{Palu1}
Y. Palu, \emph{Cluster characters for 2-Calabi-Yau triangulated
categories,} Annales de l'institut Fourier, \textbf{58} no. 6
(2008), p. 2221-2248.


\bibitem{Sch}
R. Schiffler, \emph{On cluster algebras arising from unpunctured
surfaces II}, Advances in Math. 223 (2010) 1885-1923.

\bibitem{SZ}
P. Sherman and A. Zelevinsky, \emph{Positivity and canonical bases
in rank 2 cluster algebras of finite and affine types}, Moscow
Math. J. \textbf{4} (2004), no.4, 947--974.




\end{thebibliography}
\end{document}